\documentclass[a4paper]{amsart}

\pdfoutput=1

\usepackage[utf8]{inputenc}
\usepackage[T1]{fontenc}
\usepackage{lmodern}
\usepackage{amsthm, amssymb, amsmath, amsfonts, mathrsfs}
\usepackage[colorlinks=true, pdfstartview=FitV, linkcolor=blue, citecolor=blue, urlcolor=blue,pagebackref=false]{hyperref}

\usepackage{MnSymbol} % contains notin inversed

\usepackage{esint} % contains strokedint

%\usepackage{a4wide}  %\usepackage{fullpage}
%\usepackage{amscd, graphicx, epsfig, psfrag, enumerate, dsfont}

%\usepackage{color}

%\usepackage[pagewise]{lineno}
%\linenumbers

\usepackage{microtype}

%\usepackage[notcite,notref,color]{showkeys}
%\definecolor{labelkey}{gray}{.8}
%\definecolor{refkey}{gray}{.8}

%\definecolor{darkgreen}{rgb}{0,0.5,0}
%\definecolor{darkblue}{rgb}{0,0,0.7}
%\definecolor{darkred}{rgb}{0.9,0.1,0.1}
%
%\newcommand{\jccomment}[1]{\marginpar{\raggedright\scriptsize{\textcolor{darkgreen}{#1}}}}
%
%\newcommand{\agcomment}[1]{\marginpar{\raggedright\scriptsize{\textcolor{darkblue}{#1}}}}
%
%\newcommand{\gucomment}[1]{\marginpar{\raggedright\scriptsize{\textcolor{darkred}{#1}}}}
%
%
%
%\newcommand{\jc}[1]{{\color{darkgreen}{#1}}}
%
%\newcommand{\ag}[1]{{\color{darkblue}{#1}}}
%
%\newcommand{\gu}[1]{{\color{darkred}{#1}}}

\newtheorem{proposition}{Proposition}
\newtheorem{theorem}[proposition]{Theorem}
\newtheorem{lemma}[proposition]{Lemma}

\theoremstyle{remark}
\newtheorem{remark}[proposition]{Remark}

\theoremstyle{definition}

\numberwithin{equation}{section}
\numberwithin{proposition}{section}

\renewcommand{\leq}{\leqslant}
\renewcommand{\geq}{\geqslant}
\newcommand{\ls}{\lesssim}
\newcommand{\les}{\ls}

\newcommand{\B}{\mathbf{B}}

\newcommand{\E}{\mathbb{E}}

\newcommand{\R}{\mathbb{R}}
\newcommand{\C}{\mathbb{C}}

\renewcommand{\tilde}{\widetilde}
\newcommand{\eps}{\varepsilon}

\newcommand{\Pb}{\mathbb{P}}
\newcommand{\what}{\widehat}

\newcommand{\F}{\mathcal{F}}

\title[1D Schr\"odinger equation with spacetime white noise]{The 1D Schr\"odinger equation with a spacetime white noise: the average wave function}
\author{Yu Gu}

\address[Yu Gu]{Department of Mathematical Sciences, Carnegie Mellon University, Pittsburgh, PA 15213, USA, email: yug2@andrew.cmu.edu}

%\address[Lenya Ryzhik]{Department of Mathematics, Building 380, Stanford University, Stanford, CA, 94305 USA}

\begin{document}

\begin{abstract}
For the 1D Schr\"odinger equation with a mollified spacetime white noise, we show that the average wave function converges to the Schr\"odinger equation with an effective potential after an appropriate renormalization.

\bigskip

%\noindent \textsc{MSC 2010:} 
%
%\medskip

\noindent \textsc{Keywords:} random Schr\"odinger equation, renormalization, path integral.

\end{abstract}
\maketitle
%
%
%
%
%
%%%%%%%%%%%%%%%%%%%%%%%%%%%%%%%%%%%%%%%%%%%%%%%%%%%%%%%%%%%%%%
%%%%%%%%%%%%%%%%%%%%%%%%%%%%%%%%%%%%%%%%%%%%%%%%%%%%%%%%%%%%%%
%
%
%
\section{Main result}
\label{s.intro}

Consider the Schr\"odinger equation driven by a weak stationary spacetime Gaussian potential $V(t,x)$:
\begin{equation}\label{e.maineq1}
i\partial_t\phi(t,x)+\frac12\Delta\phi(t,x)-\sqrt{\eps}V(t,x)\phi(t,x)=0,   \    \  t>0, x\in\R,
\end{equation}
on the diffusive scale $(t,x)\mapsto (\tfrac{t}{\eps^2},\tfrac{x}{\eps})$,
\begin{equation}\label{e.rescale}
\phi_\eps(t,x):=\phi(\frac{t}{\eps^2},\frac{x}{\eps})
\end{equation}
satisfies
\begin{equation}\label{e.maineq2}
i\partial_t\phi_\eps(t,x)+\frac12\Delta\phi_\eps(t,x)-\frac{1}{\eps^{3/2}}V(\frac{t}{\eps^2},\frac{x}{\eps})\phi_\eps(t,x)=0.
\end{equation}
With appropriate decorrelating assumptions on $V$, the rescaled large highly oscillatory potential $\eps^{-3/2}V(t/\eps^2,x/\eps)$ converges in distribution to a spacetime white noise, denoted by $\dot{W}(t,x)$. To the best of our knowledge, the asymptotics of $\phi_\eps$ and making sense of the limit of \eqref{e.maineq2}, which formally reads 
\[
i\partial_t \Phi(t,x)+\frac12\Delta\Phi(t,x)-\dot{W}(t,x)\Phi(t,x)=0, 
\]
is an open problem. The goal of this short note is to take a first step by analyzing $\E[\phi_\eps]$ as $\eps\to0$.

\subsection{Assumptions on the randomness}
We assume the spacetime white noise $\dot{W}(t,x)$ is built on the probability space $(\Omega,\mathcal{F},\Pb)$, and 
\[
V(t,x)=\int_{\R^2} \varrho(t-s,x-y)\dot{W}(s,y)dyds
\]
for some mollifier $\varrho\in \mathcal{C}_c^\infty$ with $\int \varrho=1$. By the scaling property of $\dot{W}$, we have  
\[
\begin{aligned}
\frac{1}{\eps^{3/2}}V(\frac{t}{\eps^2},\frac{x}{\eps})=&\frac{1}{\eps^{3/2}}\int_{\R^2}\varrho(\frac{t}{\eps^2}-s,\frac{x}{\eps}-y)\dot{W}(s,y)dyds\\
=&\frac{1}{\eps^{3/2}}\int_{\R^2}\frac{1}{\eps^3}\varrho(\frac{t-s}{\eps^2},\frac{x-y}{\eps})\dot{W}(\frac{s}{\eps^2},\frac{y}{\eps})dyds\\
 \stackrel{\text{law}}{=} &\int_{\R^2}\frac{1}{\eps^3}\varrho(\frac{t-s}{\eps^2},\frac{x-y}{\eps})\dot{W}(s,y)dyds,
\end{aligned}
\]
which converges in distribution to $\dot{W}$ independent of the choice of $\varrho$. For simplicity, we choose 
\[
\varrho(t,x)= \frac{\eta(t)}{\sqrt{\pi}}e^{-x^2},
\]
with $\eta\in\mathcal{C}_c^\infty(\R)$ and $\int \eta=1$. The covariance function of $V$ is 
\begin{equation}\label{e.cov}
R(t,x)=\E[V(t,x)V(0,0)]=\int_{\R^2} \varrho(t+s,x+y)\varrho(s,y)dyds=R_\eta(t)q(x),
\end{equation}
with 
\begin{equation}\label{e.defq}
R_\eta(t):=\int_\R\eta(t+s)\eta(s)ds,  \    \ q(x):=\frac{1}{\sqrt{2\pi}}e^{-\frac{x^2}{2}}.
\end{equation}
We define $\tilde{R}(\omega,\xi)$ as the Fourier transform of $R$ in $(t,x)$:
\[
\tilde{R}(\omega,\xi)=\int_{\R^2} R(t,x)e^{-i\omega t-i\xi x } dtdx.
\]
We use $\what{f}$ to denote the Fourier transform of $f$ in the $x$ variable:
\[
\what{f}(\xi)=\int_{\R} f(x)e^{-i\xi x}dx.
\]

\subsection{Main result}

Assuming the initial data $\phi_\eps(0,x)=\phi_0(x)\in\mathcal{C}_c^\infty(\R)$, so we have a low frequency wave before rescaling: $\phi(0,x)=\phi_0(\eps x)$. The following is the main result:
\begin{theorem}\label{t.mainth}
There exists $z_1,z_2\in \C$ depending on the mollifier $\varrho$, given by \eqref{e.defz1} and \eqref{e.defz2} respectively, such that for any $t>0,\xi\in\R$, 
\begin{equation}\label{e.mainresult}
\E[\what{\phi}_\eps(t,\xi)]e^{\frac{z_1t}{\eps}}\to \what{\phi}_0(\xi) e^{-\frac{i}{2}|\xi|^2 t+z_2t},  \quad \mbox{ as } \eps\to0.
\end{equation}
\end{theorem}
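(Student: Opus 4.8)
The plan is to combine a path-integral representation of the averaged wave function with a linked-cluster (cumulant) expansion, using the Duhamel--Dyson series in Fourier as the rigorous backbone. First, since the law of $V$ is invariant under spatial translations, $\E[\phi_\eps(t,\cdot)]=\E[\mathcal U_\eps(t)]\phi_0$ commutes with translations, where $\mathcal U_\eps(t)$ is the random solution operator; hence $\E[\mathcal U_\eps(t)]$ is a Fourier multiplier and $\E[\what\phi_\eps(t,\xi)]=m_\eps(t,\xi)\what\phi_0(\xi)$ for a scalar $m_\eps$, so it suffices to show $m_\eps(t,\xi)e^{z_1t/\eps}\to e^{-\frac i2\xi^2t+z_2t}$. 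To compute $m_\eps$, I would iterate Duhamel with free propagator $e^{\frac i2 t\Delta}$ to obtain the Dyson series and take $\E$; since $V$ is centred Gaussian, Wick's theorem turns the order-$2m$ term into a sum over pairings of the $2m$ potentials, each pair contributing the covariance \eqref{e.cov}. Equivalently, and more transparently, the Feynman--Kac formula gives
\[
\E[\phi_\eps(t,x)]=\E_B\Big[\phi_0(x+B_t)\exp\big(-\tfrac12 A_\eps(B)\big)\Big],\qquad A_\eps(B)=\int_0^t\!\!\int_0^t\frac{1}{\eps^{3}}R_\eta\Big(\frac{s-s'}{\eps^2}\Big)q\Big(\frac{B_s-B_{s'}}{\eps}\Big)\,ds\,ds',
\]
where $B$ is the complex/oscillatory Brownian motion associated with $e^{\frac i2 t\Delta}$ and the self-interaction $A_\eps$ arises from integrating out the Gaussian field via $\E[e^{-i\int \eps^{-3/2}V_\eps}]=e^{-\frac12\var(\int\eps^{-3/2}V_\eps)}$ together with \eqref{e.cov}.

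Second, I would analyse $A_\eps$ asymptotically. The kernel $R_\eta((s-s')/\eps^2)$ is supported on $|s-s'|\lesssim\eps^2$, and on that scale Brownian scaling makes $(B_s-B_{s'})/\eps$ an $O(1)$ complex Gaussian, so $q((B_s-B_{s'})/\eps)=O(1)$; substituting $s'=s\mp\eps^2\tau$ leaves one power of $\eps$ and $A_\eps=O(\eps^{-1})$. The key step is the cumulant (linked-cluster) expansion of the scalar factor,
\[
\log\Big(m_\eps(t,\xi)\,e^{\frac i2\xi^2t}\Big)=\sum_{n\ge1}\frac{(-1/2)^n}{n!}\,\kappa_n\big(A_\eps\big)+o(1),
\]
where $\kappa_n$ is the $n$-th cumulant of $A_\eps$, equivalently the sum of connected pairing diagrams with $n$ covariance bonds. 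A power count shows that a connected cluster forces all its $2n$ time variables into a single window of width $\eps^2$, leaving exactly one free centre time integrated over $[0,t]$; balancing the $\eps^{-3n}$ prefactor against the $\eps^{4n-2}$ from the localized time differences (the connected moment of the $q$'s being $O(1)$) yields $\kappa_n=O(t\,\eps^{\,n-2})$. Hence only $n=1$ and $n=2$ survive as $\eps\to0$: the first cumulant $\E[A_\eps]=\tfrac{2z_1t}{\eps}+o(1)$ produces the divergent self-energy $-z_1t/\eps$, and the second cumulant $\tfrac18\var(A_\eps)=z_2t+o(1)$ produces the finite correction, while $n\ge3$ vanish. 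Reading off the surviving integrals gives the explicit $z_1$ of \eqref{e.defz1} and $z_2$ of \eqref{e.defz2}, both independent of $\xi$ because they involve only the local increments of $B$; the endpoint/frequency dependence stays in $\E_B[\phi_0(x+B_t)]$, the free evolution with symbol $e^{-\frac i2\xi^2t}$, once one checks that the local interaction decouples from the macroscopic endpoint in the limit.

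I expect the main obstacle to be rigor rather than bookkeeping: the oscillatory nature of the problem (the complex-variance Brownian motion, equivalently the internal momentum integrals in the Dyson series that are controlled only by the quadratic phases $e^{-\frac i2 p^2(s-s')}$ and the Gaussian cut-offs $e^{-\eps^2 p^2/2}$ at scale $p\sim\eps^{-1}$) means the series is not absolutely convergent and the cancellations must be exploited. Concretely I would (i) make the representation rigorous through the Dyson series, avoiding the ill-defined measure; (ii) prove a uniform bound showing that connected clusters with $n\ge3$ bonds, and all ``crossing'' pairings, are $o(1)$, via non-stationary phase estimates in the internal momenta whenever two paired times are separated by $\gg\eps^2$; and (iii) control the tail of the Dyson series so as to justify exponentiating the $n=1,2$ clusters and the decoupling of the endpoint. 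Steps (ii)--(iii) -- the uniform smallness of higher connected diagrams and the convergence of the resummation -- are where the real work lies; the rest is the scaling computation sketched above.
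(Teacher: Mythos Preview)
Your proposal is viable and your power counting $\kappa_n(A_\eps)=O(t\,\eps^{\,n-2})$ is correct, but the route is genuinely different from the paper's. Both arguments begin from the same probabilistic representation (the paper's Lemma~\ref{l.fk}, which it justifies by analytic continuation from a real parameter rather than by invoking a ``complex Brownian motion'' or by summing the Dyson series). From there the paper does \emph{not} expand in cumulants. Instead it centers $X_t^\eps$ by its mean (your $\kappa_1$, giving $z_1t/\eps$), applies the Clark--Ocone formula to write $X_t^\eps-\E_\B[X_t^\eps]=\eps\int_0^{t/\eps^2}Y_r\,dB_r$ with $\{Y_r\}$ stationary and of finite range, and then invokes the martingale central limit theorem to obtain a joint Gaussian limit for $(\eps B_{t/\eps^2},\,X_t^\eps-\E_\B[X_t^\eps])$ in which the two components are \emph{independent}; this single step simultaneously encodes ``$\kappa_2\to 2z_2t$'', ``$\kappa_n\to0$ for $n\ge3$'', and your endpoint--decoupling claim. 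The passage from weak convergence to convergence of $\E_\B[e^{\,\cdots}]$ is then handled by a uniform exponential-moment bound (Lemma~\ref{l.uniforminte}), proved by splitting the time axis into blocks of length $M$ so that alternate blocks are i.i.d.\ and bounded.

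What each approach buys: the paper's stochastic-analysis route is short and avoids touching any cumulant with $n\ge3$ individually---the CLT absorbs them, and uniform integrability replaces any resummation argument. Your linked-cluster route is closer to the diagrammatic picture and makes the mechanism (only simple and one-loop self-energies survive) more transparent, but to make it rigorous you must control \emph{all} $\kappa_n$ uniformly and justify exchanging the limit $\eps\to0$ with the cumulant sum; this is essentially equivalent in strength to the paper's exponential-moment lemma, and the i.i.d.-block trick used there is probably the cleanest way to get it. Your remark that the Dyson series is not absolutely convergent is a red herring once you pass to the $\E_\B$--representation: $q(\sqrt{i}\,x)=(2\pi)^{-1/2}e^{-ix^2/2}$ is bounded, so $X_t^\eps$ is a bounded (complex) random variable for each fixed $\eps$, and the real issue is only the $\eps\to0$ uniformity.
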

\bigskip
We make a few remarks.

\begin{remark}
The limit in \eqref{e.mainresult} is the solution to 
\[
i\partial_t \bar{\phi}+\frac12\Delta\bar{\phi}-iz_2\bar{\phi}=0,  \   \ \bar{\phi}(0,x)=\phi_0(x),
\]
written in the Fourier domain:
\[
\bar{\phi}(t,x)=\frac{1}{2\pi}\int_\R \what{\phi}_0(\xi) e^{-\frac{i}{2}|\xi|^2 t+z_2t}e^{i\xi x}d\xi.
\]
\end{remark}

\begin{remark}
In the parabolic setting, a Wong-Zakai theorem is proved \cite{chandra2017moment,gu2018another,hairer2015multiplicative,hairer2015wong} for 
\[
\partial_t u_\eps=\frac12\Delta u_\eps+\frac{1}{\eps^{3/2}}V(\frac{t}{\eps^2},\frac{x}{\eps})u_\eps, \  \ u(0,x)=u_0(x).
\]
The result says that there exists $c_1,c_2>0$ depending on $\varrho$ such that 
\begin{equation}\label{e.conshe}
u_\eps(t,x) e^{-\frac{c_1t}{\eps}-c_2t}\Rightarrow \mathcal{U}(t,x) \quad \mbox{ in distribution},
\end{equation}
 where $\mathcal{U}$ solves the stochastic heat equation with a multiplicative spacetime white noise
\[
\partial_t \mathcal{U}(t,x)=\frac12\Delta \mathcal{U}(t,x)+\mathcal{U}(t,x)\dot{W}(t,x),  \    \  \mathcal{U}(0,x)=u_0(x),
\]
with the product $\mathcal{U}(t,x)\dot{W}(t,x)$ interpreted in the It\^o's sense. Writing the above equation in the mild formulation, it is easy to see that $\E[\mathcal{U}]$ solves the unperturbed heat equation
\[
\E[\mathcal{U}(t,x)]=\int_\R \frac{1}{\sqrt{2\pi t}}e^{-\frac{|x-y|^2}{2t}}u_0(y)dy,
\]
thus, a consequence of \eqref{e.conshe} is 
\[
\E[\what{u}_\eps(t,\xi)]e^{-\frac{c_1t}{\eps}-c_2t}\to \what{u}_0(\xi)e^{-\frac12|\xi|^2t},
\]
which should be compared to \eqref{e.mainresult} in the Schr\"odinger setting, with $-c_1,c_2$ corresponding to $z_1,z_2$.
\end{remark}

\begin{remark}\label{r.kinetic}
Starting from the microscopic dynamics \eqref{e.maineq1}, if we consider a time scale that is shorter than the one in \eqref{e.rescale}, and a low frequency initial data 
\[
(t,x)\mapsto (\frac{t}{\eps},\frac{x}{\sqrt{\eps}}), \quad \phi(0,x)=\phi_0(\sqrt{\eps} x),
\]  a homogenization result was proved in \cite{gu2016random}: for any $t>0,\xi\in\R$,
\begin{equation}\label{e.homo}
\eps^{\frac{d}{2}}\what{\phi}(\frac{t}{\eps},\sqrt{\eps}\xi)\to \what{\phi}_0(\xi)e^{-\frac{i}{2}|\xi|^2t-z_1t}
\end{equation}
in probability. Here $z_1$ is the same constant as in Theorem~\ref{t.mainth}. If we instead consider a high frequency initial data $\phi(0,x)=\phi_0(x)$, which varies on the same scale as the random media, a kinetic equation was derived on the time scale of $\tfrac{t}{\eps}$ in \cite{bal2011asymptotics}:
\begin{equation}\label{e.kinetic}
\E[|\what{\phi}(\frac{t}{\eps},\xi)|^2]\to \overline{W}(t,\xi),
\end{equation}
where $\overline{W}(t,\xi)=\int_\R W(t,x,\xi)dx$ and $W$ solves the radiative transfer equation% is the time-momentum marginal of the so-called Wigner transform and satisfies
\begin{equation}\label{e.wigner}
\partial_t W(t,x,\xi)+\xi\cdot \nabla_x W(t,x,\xi)=\int_{\R} \tilde{R}(\frac{|p|^2-|\xi|^2}{2},p-\xi) (W(t,x,p)-W(t,x,\xi))\frac{dp}{2\pi}.%  \   \  W(0,\xi)=|\what{\phi}_0(\xi)|^2.
\end{equation}
For similar results in the case of a spatial randomness, see \cite{chen2018weak,erdHos2000linear,spohn1977derivation}. The equation \eqref{e.wigner} shows that, in the high frequency regime where the wave and the random media interact fully, the momentum variable follows a jump process with the kernel given by $\tilde{R}(\tfrac{|p|^2-|\xi|^2}{2},p-\xi)$. The real part of the constant $z_1$, in \eqref{e.mainresult} and \eqref{e.homo}, describes the total scattering cross-section, i.e., the jumping rate at the zero frequency:%, for the equation satisfied by $W$:
\[
2\mathrm{Re}[z_1]=\int_{\R} \tilde{R}(\frac{|p|^2}{2},p)\frac{dp}{2\pi}.
\]
Thus, the renormalization in \eqref{e.mainresult} can be viewed as a compensation of the exponential attenuation of wave propagation on the time scale of $\tfrac{t}{\eps^2}$. We emphasize that the average wave function (more precisely, the term $\E[\what{\phi}_\eps(t,\xi)]\E[\what{\phi}_\eps^*(t,\xi)]$) only captures the ballistic component of wave.%, and provides little information on the scattering components.
 \end{remark}
 
 \begin{remark}
It is unclear at this stage what explicit information the convergence in  \eqref{e.mainresult} implies. On one hand, if we expect the family of random variables $\{\what{\phi}_\eps(t,\xi)\}_{\eps\in(0,1)}$ to converge in distribution to some random limit after any possible renormalization and assume the \emph{uniform integrability}, then our result shows that $e^{z_1t/\eps}$ is the only possible renormalization factor since the uniform integrability ensures the mean $\E[\what{\phi}_\eps(t,\xi)]$ also converges after the same rescaling. On the other hand, without the uniform integrability it is a priori unclear whether the convergence of $\what{\phi}_\eps(t,\xi)$ is related to the convergence of its first moment. In addition, based on the discussion in Remark~\ref{r.kinetic}, we know that the wave field $\what{\phi}(t,\xi)$ decays exponentially on the time scale $t/\eps^2$ because $\mathrm{Re}[z_1]>0$, and the lost energy escapes to high frequency regime through multiple scatterings. From this perspective, the physical meaning is unclear when we multiply the exponentially small solution by $e^{z_1t/\eps}$ in \eqref{e.mainresult} so that something ``nontrivial'' can still be observed. In the parabolic setting, the renormalization in \eqref{e.conshe} can be naturally viewed as a shift of the height function by its average growing speed, after a Hopf-Cole transformation
\[
\log [u_\eps(t,x) e^{-\frac{c_1t}{\eps}-c_2t}]=\log u_\eps (t,x)-\frac{c_1t}{\eps}-c_2t.
\]
For the Schr\"odinger equation, it is less clear to us what should be the right physical quantity to look at. Another choice is to consider $\what{\phi}(t,\xi)$ for $\xi\sim O(1)$ and $t\sim O(\eps^{-\alpha})$ with some $\alpha>1$. In light of \eqref{e.kinetic} and the long time behavior of \eqref{e.wigner} analyzed in \cite{KR}, we expect some diffusion equation to show up in the limit.
\end{remark}
 
 \begin{remark}
 The convergences in \eqref{e.mainresult}, \eqref{e.homo} and \eqref{e.kinetic} hold in all dimensions $d\geq1$. In other words, if we start from the microscopic dynamics \eqref{e.maineq1}, with a random potential of size $\sqrt{\eps}$, then in all dimensions: (i) on the time scale $t/\eps$, depending on the initial data, we have either \eqref{e.homo} or \eqref{e.kinetic}; (ii) on the time scale $t/\eps^2$, if we have a low frequency initial data, then \eqref{e.mainresult} holds. The proof and the result does NOT depend on the dimensions. Nevertheless, with a random potential of size $\sqrt{\eps}$, the change of variables $(t,x)\mapsto (\tfrac{t}{\eps^2},\tfrac{x}{\eps})$ chosen in \eqref{e.maineq2} only leads to a spacetime white noise in $d=1$.
 \end{remark}
 
 \begin{remark}
 When the spacetime potential $V(t,x)$ is replaced by a spatial potential $V(x)$, similar problems (including nonlinear ones) have been analyzed in \cite{allez2015continuous,debussche2017solution,debussche2018schrodinger,Gu2017aa,gubinelli,cyril,zhang2012convergence} in $d=1,2,3$.
 \end{remark}

\section{Proofs}
\label{s.proof}

The proof contains two steps. First, we derive a probabilistic representation of the average wave function $\E[\what{\phi}(t,\xi)]$ with some auxiliary 
Brownian motion $\{B_t\}_{t\geq0}$ built on another probability space $(\Sigma,\mathcal{A},\Pb_\B)$. Using this probabilistic representation, we pass to the limit using tools from stochastic analysis. Similar proofs have already appeared in \cite{Gu2017aa,gu2018another}.

\subsection{Probabilistic representation}
Assuming $\{B_t\}_{t\geq 0}$ is a standard Brownian motion starting from the origin, defined on $(\Sigma,\mathcal{A},\Pb_\B)$. We denote the expectation with respect to $\{B_t\}_{t\geq0}$ by $\E_\B$.
\begin{lemma}\label{l.fk}
For the equation 
\begin{equation}\label{e.fkeq1}
i\partial_t \psi+\frac12\Delta\psi-V(t,x)\psi=0, \   \ t>0, x\in\R,
\end{equation}
with $\psi(0,x)=\psi_0(x)$, we have 
\begin{equation}\label{e.fkre}
\E[\what{\psi}(t,\xi)]=\what{\psi}_0(\xi)\E_\B[ e^{i\sqrt{i}\xi B_t} e^{-\frac12\int_0^t\int_0^t R(s-u,\sqrt{i}(B_s-B_u))dsdu}].
\end{equation}
\end{lemma}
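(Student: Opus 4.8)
The plan is to combine a Feynman--Kac path-integral representation with a Gaussian average over the potential $V$, handling the imaginary ``diffusion'' of the Schr\"odinger operator by analytic continuation from a genuine heat equation. Concretely, rewrite \eqref{e.fkeq1} as $\partial_t\psi=\tfrac{i}{2}\Delta\psi-iV\psi$ and embed it in the one-parameter family
\[
\partial_t\psi_\theta=\tfrac{\theta}{2}\Delta\psi_\theta-iV\psi_\theta,\qquad \psi_\theta(0)=\psi_0,
\]
which is the original equation at $\theta=i$ and a (complex-potential) heat equation for $\theta>0$. I would first prove the identity for real $\theta>0$, where every object is a bona fide probabilistic expectation, and then continue in $\theta$ up to the imaginary axis.

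For $\theta>0$ the classical Feynman--Kac formula applies, since the potential enters only through the modulus-one factor $e^{-i\int V}$: with $\E_\B$ the expectation over $B$,
\[
\psi_\theta(t,x)=\E_\B\Big[\psi_0(x+\sqrt\theta B_t)\exp\Big(-i\int_0^t V\big(r,\,x+\sqrt\theta B_{t-r}\big)\,dr\Big)\Big].
\]
Taking the Fourier transform in $x$ and performing the (real) change of variables $y=x+\sqrt\theta B_t$ pulls out the two factors $e^{i\sqrt\theta\xi B_t}$ and $\what\psi_0(\xi)$ that appear in \eqref{e.fkre}, leaving inside the exponential the field $V$ evaluated along the shifted path. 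Now taking $\E$ over $V$: for each fixed Brownian path the line integral $G:=\int_0^t V(r,\,\cdot\,)\,dr$ along that path is a centred real Gaussian, so $\E[e^{-iG}]=e^{-\frac12\E[G^2]}$, and by the stationarity of $V$ the variance is $\E[G^2]=\int_0^t\int_0^t R\big(r-r',\,\sqrt\theta(B_r-B_{r'})\big)\,dr\,dr'$, independent of the base point $y$ (here one uses that $R$ is even in both arguments, see \eqref{e.cov}--\eqref{e.defq}, to symmetrize the time labels). An application of Fubini then yields exactly \eqref{e.fkre} with $\sqrt\theta$ in place of $\sqrt i$.

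It remains to let $\theta\to i$. The right-hand side is manifestly analytic on $\{\mathrm{Re}\,\theta>0\}$ and continuous up to the imaginary axis: because $q$ is Gaussian, $|q(\sqrt\theta w)|=\tfrac{1}{\sqrt{2\pi}}e^{-\frac12\mathrm{Re}(\theta)w^2}\le\tfrac{1}{\sqrt{2\pi}}$, so the Brownian functional is bounded by $e^{Ct^2}$ uniformly for $\mathrm{Re}\,\theta\ge0$, and analyticity together with continuity follow from Morera's theorem and dominated convergence (this is precisely where the Gaussian choice of $q$ is convenient). The main obstacle is the left-hand side: one needs $\theta\mapsto\E[\what\psi_\theta(t,\xi)]$ to be analytic on $\{\mathrm{Re}\,\theta>0\}$ and continuous up to $\theta=i$, which requires estimates on the propagator of $\partial_t-\tfrac{\theta}{2}\Delta+iV$ that are uniform as $\mathrm{Re}\,\theta\downarrow0$, where the smoothing of the holomorphic semigroup degenerates. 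The key enabling observation is again that $V$ couples only through the unimodular factor $e^{-i\int V}$, so the Feynman--Kac representation provides bounds uniform in the randomness; this legitimizes interchanging $\E$ with the continuation and controlling the limit. With both sides analytic on $\{\mathrm{Re}\,\theta>0\}$, continuous up to the imaginary axis, and agreeing on $(0,\infty)$, uniqueness of analytic continuation gives \eqref{e.fkre} at $\theta=i$, completing the proof.
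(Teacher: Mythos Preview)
Your strategy differs from the paper's and is sound in outline, but the step you flag as ``the main obstacle'' is not actually resolved, and that is where the proof is incomplete.

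The paper never analytically continues the left-hand side. Instead it writes the Schr\"odinger solution $\what\psi(t,\xi)$ directly as a Dyson series in Fourier space (iterating the Duhamel formula for $i\partial_t+\tfrac12\Delta$), takes $\E$ term by term using the Wick rule for Gaussian moments of $\what V$, and recognises each resulting term as a sum over pairings. On the right-hand side it introduces the Brownian functional $F_1(z)=\E_\B[e^{iz\xi B_t}e^{-\frac12\int\!\!\int R(s-u,z(B_s-B_u))}]$, shows it equals its term-by-term expansion $F_2(z)$ on the closed sector $\{\mathrm{Re}(z^2)\ge 0\}$ by analytic continuation from real $z$, and then matches $F_{2,n}(\sqrt i)$ with the $2n$-th Wick term. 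So the analytic continuation is applied only to the Brownian side, while the Schr\"odinger side is handled by an explicit series at $\theta=i$.

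In your argument, after proving the identity for $\theta>0$ and establishing analyticity/continuity of the right-hand side up to $\theta=i$, you must still identify the boundary value of the left-hand side with $\E[\what\psi(t,\xi)]$ for the actual Schr\"odinger solution. The sentence ``the Feynman--Kac representation provides bounds uniform in the randomness; this legitimizes \dots\ controlling the limit'' does not do this: the Feynman--Kac formula is only known for $\mathrm{Re}\,\theta>0$, so all it tells you is that the right-hand side has a limit (which you already know). It does not tell you that $\psi_\theta\to\psi$ as solutions of the PDE. An independent argument is needed, e.g.\ strong continuity of the semigroup $e^{\theta t\Delta/2}$ up to the imaginary axis combined with a Duhamel/Gr\"onwall estimate, and here the fact that $V(t,\cdot)$ is a.s.\ unbounded on $\R$ makes this nontrivial. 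Alternatively, expanding $\psi_\theta$ itself as a Dyson series and passing to the limit term by term works---but that is precisely the paper's route, and at that point the detour through $\theta>0$ and Feynman--Kac is no longer needed.
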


On the formal level, \eqref{e.fkre} comes from an application of the Feynman-Kac formula to \eqref{e.fkeq1} then averaging with respect to $V$. We write \eqref{e.fkeq1} as 
\[
\partial_t \psi=\frac{i}{2}\Delta \psi-iV(t,x)\psi=0,
\]
and assume the following expression:
\[
\psi(t,x)=\E_\B[\psi_0(x+\sqrt{i}B_t)e^{-i\int_0^t V(t-s,x+\sqrt{i}B_s)ds}].
\]
Averaging with respect to $V$ and using the Gaussianity yields 
\[
\E[\psi(t,x)]=\E_\B[\psi_0(x+\sqrt{i}B_t) e^{-\frac12\int_0^t\int_0^t R(s-u,\sqrt{i}(B_s-B_u))dsdu}],
\]
which, after taking the Fourier transform, gives \eqref{e.fkre}.

\begin{proof}
We follow the proof of \cite[Proposition 2.1]{Gu2017aa}, where a similar formula is derived for spatial random potentials. For the convenience of readers, we provide all the details here. 

Fix $(t,\xi)$, we define the function
\[
F_1(z):=\E_\B[ e^{iz\xi B_t-\frac12\int_0^t\int_0^t R(s-u,z(B_s-B_u))dsdu}],\     \  z\in \bar{D}_0,
\]
with $D_0:=\{z\in \C: \mathrm{Re}[z^2]>0\}$. We also define the corresponding Taylor expansion
\[
F_2(z)=\sum_{n=0}^\infty F_{2,n}(z),\quad z\in \bar{D}_0,
\]
with
\[
F_{2,n}(z) :=\frac{(-1)^n}{2^n(2\pi)^{n}n!} \int_{[0,t]^{2n}}\int_{\R^{n}}\prod_{j=1}^n \what{R}(s_j-u_j,p_j) \E_\B\left[ e^{iz\xi B_t}  \prod_{j=1}^ne^{izp_j (B_{s_j}-B_{u_j})}\right] dpdsdu.
\]
Recall that $R(t,x)=\tfrac{R_\eta(t)}{\sqrt{2\pi}} e^{-x^2/2}$. In the definition of $F_1$, we have extended the definition so that  $R(t,z)=\tfrac{R_\eta(t)}{\sqrt{2\pi}} e^{-z^2/2}$ for all $z\in\C$. We also emphasize that $\what{R}(t,p)$ is the Fourier transform of $R(t,x)$ in the $x-$variable: 
\[
\what{R}(t,p)=R_\eta(t)e^{-\frac12p^2}.
\]

 It is straightforward to check that both 
$F_1$ and $F_2$ are analytic on $ D_0$ and continuous on $\bar{
  D}_0$. Note that $\sqrt{i}\in\partial  D_0$.
The goal is to show that 
\begin{equation}\label{may1806}
\E[\what{\psi}(t,\xi)]=\what{\psi}_0(\xi)F_1(\sqrt{i}).
\end{equation}
Since $(z,s,u)\mapsto R(s-u,z(B_s-B_u))$ is  bounded on $ \bar{ D}_0\times\R_+^2$, we have 
\begin{equation}
\begin{aligned}
F_1(z)=&\sum_{n=0}^\infty \frac{(-1)^n}{2^n n!} \E_\B\left[ e^{iz\xi B_t} \left(\int_{[0,t]^{2}}R(s-u,z(B_s-B_u))dsdu\right)^n\right]\\
=&\sum_{n=0}^\infty \frac{(-1)^n}{2^n n!} \E_\B\left[ e^{iz\xi B_t} \int_{[0,t]^{2n}}\prod_{j=1}^n R(s_j-u_j,z(B_{s_j}-B_{u_j}))dsdu\right]\\
=&\sum_{n=0}^\infty \frac{(-1)^n}{2^n(2\pi)^{n}n!} \E_\B\left[
  e^{iz\xi B_t}
  \int_{[0,t]^{2n}}\int_{\R^{n}}\prod_{j=1}^n \what{R}(s_j-u_j,p_j)
  e^{izp_j(B_{s_j}-B_{u_j})}dp dsdu\right].
\end{aligned}
\end{equation}
For $z=x\in\R$, we can apply the Fubini theorem to see that
$
F_1(x)=F_2(x).
$
Due to the analyticity and continuity of
$F_1$ and $F_2$, we therefore have $F_1(z)=F_2(z)$ for all $z\in \bar{ D}_0$. 
Hence,~(\ref{may1806}) is equivalent to  
\begin{equation}
\label{022204}
\E[\what{\psi}(t,\xi)]=\what{\psi}_0(\xi)\sum_{n=0}^\infty
F_{2,n}(\sqrt{i}).
\end{equation}
For a fixed $n$, we rewrite 
\[
\begin{aligned}
F_{2,n}(\sqrt{i})=\frac{(-1)^n}{2^n(2\pi)^{n}n!} \int_{[0,t]^{2n}}\int_{\R^{2n}}&
\prod_{j=1}^n \what{R}(s_{2j-1}-s_{2j},p_{2j-1})\delta(p_{2j-1}+p_{2j})\\
&\times  \E_\B\left[ e^{i\sqrt{i}\xi  B_t} e^{-\sum_{j=1}^{2n} i\sqrt{i}p_jB_{s_j}}\right]dsdp.
\end{aligned}
\]
%where we changed $p\mapsto -p$. 
Let $\sigma$ denote the permutations of $\{1,\ldots,2n\}$. After a relabeling of the $p$-variables we can write 
\begin{equation}\label{may1802}
\begin{aligned}
F_{2,n}(\sqrt{i}) =\frac{(-1)^n}{2^n(2\pi)^{n}n!} \sum_{\sigma}\int_{[0,t]_<^{2n}}\int_{\R^{2n}}&\prod_{j=1}^n \what{R}(s_{\sigma(2j-1)}-s_{\sigma(2j)},p_{\sigma(2j-1)})\delta(p_{\sigma(2j-1)}+p_{\sigma(2j)})\\
&\times  \E_\B\left[ e^{i\sqrt{i}\xi B_t} e^{-\sum_{j=1}^{2n} i\sqrt{i}p_jB_{s_j}}\right]ds dp,
\end{aligned}
\end{equation}
where $[0,t]_<^{2n}:=\{(s_1,\ldots,s_{2n}): 0\leq s_{2n}\leq\ldots\leq s_{1}\leq t\}$. Let $\F$ denote the pairings formed over $\{1,\ldots,2n\}$. 
It is straightforward to check that 
\begin{equation}\label{may1804}
\begin{aligned}
F_{2,n}(\sqrt{i}) =\frac{1}{i^{2n}(2\pi)^{n}} \sum_{\F}\int_{[0,t]_<^{2n}}\int_{\R^{2n}}&\prod_{(k,l)\in \F} \what{R}(s_k-s_l,p_k)\delta(p_k+p_l)\\
&\times  \E_\B\left[ e^{i\sqrt{i}\xi B_t} e^{-\sum_{j=1}^{2n} i\sqrt{i}p_jB_{s_j}}\right]ds dp.
\end{aligned}
\end{equation}
%See~\cite{chen2016weak} for a detailed explanation of the mapping between the sets of permutations and   pairings that accounts for the $2^nn!$ difference in the pre-factors in (\ref{may1802}) and (\ref{may1804}). 
The pre-factors in (\ref{may1802}) and (\ref{may1804}) differ by a factor of $2^nn!$ since $i^{-2n}=(-1)^n$, and this comes from the mapping between the sets of permutations and pairings: for a given pairing with $n$ pairs, we have $n!$ ways of permutating the pairs, and inside each pair, we have $2$ options which leads to the additional factor of $2^n$. %This is explained in detail in the proof of~\cite[Proposition 2.1]{chen2016weak}

The phase factor inside the  integral in (\ref{may1804}) can be computed explicitly:
\begin{equation}\label{may1812}
\E_\B\left[ e^{i\sqrt{i}\xi B_t} e^{-\sum_{j=1}^{2n} i\sqrt{i}p_jB_{s_j}}\right]=e^{-\frac{i}{2}|\xi|^2(t-s_1)-\frac{i}{2}|\xi-p_1|^2(s_1-s_2)-\ldots-\frac{i}{2}|\xi-\ldots-p_{2n}|^2s_{2n}}.
\end{equation}

On the other hand, the equation \eqref{e.fkeq1} is written in the Fourier domain as 
\[
\partial_t \what{\psi}=-\frac{i}{2}|\xi|^2 \what{\psi}+\int_\R\frac{\what{V}(t,dp)}{2\pi i}\what{\psi}(t,\xi-p),   \    \ \what{\psi}(0,\xi)=\what{\psi}_0(\xi),
\]
where $V(t,x)$ admits the spectral representation $V(t,x)=\int_\R \tfrac{\what{V}(t,dp)}{2\pi}e^{ipx}$. Using the above formula, we can write the solution $\what{\psi}(t,\xi)$ as an infinite series 
\begin{equation}\label{may1810}
\begin{aligned}
\what{\psi}(t,\xi)=\sum_{n=0}^\infty \int_{[0,t]_<^n}\int_{\R^{n}}& \prod_{j=1}^n \frac{\what{V}(s_j,dp_j)}{2\pi i} e^{-\frac{i}{2}|\xi|^2(t-s_1)-\frac{i}{2}|\xi-p_1|^2(s_1-s_2)-\ldots-\frac{i}{2}|\xi-\ldots-p_{n}|^2s_{n}}\\
&\times \what{\psi}_0(\xi-p_1-\ldots-p_n) ds.
\end{aligned}
\end{equation}
 Evaluating the expectation $\E [\what{\psi}(t,\xi)]$ in \eqref{may1810}, using the Wick
formula for computing the Gaussian moment 
\[
\E[\what{V}(s_1,dp_1)\ldots\what{V}(s_n,dp_n)],
\]
and the  fact that 
\[
\E[\what{V}(s_i,dp_i)\what{V}(s_j,dp_j)]=2\pi \what{R}(s_i-s_j,p_i)\delta(p_i+p_j)dp_idp_j,
\]
and comparing the result to (\ref{may1804})-(\ref{may1812}), 
we conclude that \eqref{022204} holds, which completes the proof.
\end{proof}

\subsection{Convergence of Brownian functionals}
By Lemma~\ref{l.fk}, the interested quantity is written as
\[
\E[\what{\phi}_\eps(t,\xi)]=\what{\phi}_0(\xi)\E_\B[e^{i\sqrt{i}\xi B_t} e^{-\frac12\int_0^t\int_0^t R_\eps(s-u,\sqrt{i}(B_s-B_u))dsdu}],
\]
with $R_\eps$ defined as the covariance function of $\eps^{-3/2}V(t/\eps^2,x/\eps)$:
\[
R_\eps(t,x)=\frac{1}{\eps^3}R(\frac{t}{\eps^2},\frac{x}{\eps}).
\]
After a change of variable and using the scaling property of the Brownian motion, we have 
\[
\begin{aligned}
&\int_0^t\int_0^t R_\eps(s-u,\sqrt{i}(B_s-B_u))dsdu\\
&=\eps\int_0^{t/\eps^2}\int_0^{t/\eps^2}R_\eta(s-u)q(\sqrt{i}( B_{\eps^2s}- B_{\eps^2u})/\eps)dsdu\\
& \stackrel{\text{law}}{=}\eps\int_0^{t/\eps^2}\int_0^{t/\eps^2}R_\eta(s-u)q(\sqrt{i}(B_s-B_u))dsdu,
\end{aligned}
\]
where $R_\eta$ and $q$ were defined in \eqref{e.defq}. Thus, by defining  
\begin{equation}\label{e.defXepst}
X^\eps_t:=\frac{\eps}{2}\int_0^{t/\eps^2}\int_0^{t/\eps^2}R_\eta(s-u)q(\sqrt{i}(B_s-B_u))dsdu,
\end{equation}
we have
\begin{equation}\label{e.prore}
\E[\what{\phi}_\eps(t,\xi)]=\what{\phi}_0(\xi) \E_\B[ e^{i\sqrt{i}\xi \eps B_{t/\eps^2}-X^\eps_t}].
\end{equation}

To pass to the limit of $\E[\what{\phi}_\eps(t,\xi)]$, it suffices to prove the weak convergence of the random vector $(\eps B_{t/\eps^2}, X_t^\eps)$ (for fixed $t>0$) and a uniform integrability condition. The proof of Theorem~\ref{t.mainth} reduces to the following three lemmas.

\begin{lemma}\label{l.mean}
$\E_\B[X^\eps_t]=\frac{z_1t}{\eps}+O(\eps)$ with $z_1$ defined in \eqref{e.defz1}.
\end{lemma}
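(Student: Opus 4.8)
The plan is to compute $\E_\B[X^\eps_t]$ exactly, up to the $O(\eps)$ error, by interchanging the expectation with the double time-integral and reducing everything to a single one-dimensional Gaussian computation. Since $|q(\sqrt{i}(B_s-B_u))|=\tfrac{1}{\sqrt{2\pi}}|e^{-\frac{i}{2}(B_s-B_u)^2}|=\tfrac{1}{\sqrt{2\pi}}$ is uniformly bounded, $R_\eta$ is bounded, and the domain $[0,t/\eps^2]^2$ has finite measure, Fubini applies without difficulty and gives
\[
\E_\B[X^\eps_t]=\frac{\eps}{2}\int_0^{t/\eps^2}\!\!\int_0^{t/\eps^2} R_\eta(s-u)\,\E_\B\!\left[q(\sqrt{i}(B_s-B_u))\right]ds\,du .
\]

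First I would evaluate the inner expectation. For $s\neq u$ the increment $B_s-B_u$ is centred Gaussian with variance $|s-u|$, so a direct Gaussian integral, using $\int_\R e^{-\frac{a}{2}x^2}dx=\sqrt{2\pi/a}$ (principal branch, valid for $\mathrm{Re}\,a>0$, here with $a=|s-u|^{-1}+i$), gives
\[
\E_\B\!\left[q(\sqrt{i}(B_s-B_u))\right]=\frac{1}{\sqrt{2\pi}}\,\E_\B\!\left[e^{-\frac{i}{2}(B_s-B_u)^2}\right]=\frac{1}{\sqrt{2\pi}}\,\frac{1}{\sqrt{1+i|s-u|}} .
\]
The only point requiring care is the branch of the square root: since $1+i|s-u|$ lies in the open upper half-plane, the principal branch is the consistent choice, in agreement with the analytic continuation of $q$ underlying Lemma~\ref{l.fk}.

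Next I would exploit that the integrand now depends on $s$ and $u$ only through $r:=s-u$. Setting $g(r):=\frac{1}{\sqrt{2\pi}}\,R_\eta(r)(1+i|r|)^{-1/2}$, which is even and, by the compact support of $\eta$, supported in some fixed interval $[-L,L]$, the standard identity $\int_0^T\!\int_0^T g(s-u)\,ds\,du=\int_{-T}^{T}g(r)(T-|r|)\,dr$ with $T=t/\eps^2$ yields, as soon as $\eps$ is small enough that $T>L$,
\[
\E_\B[X^\eps_t]=\frac{\eps T}{2}\int_\R g(r)\,dr-\frac{\eps}{2}\int_\R|r|\,g(r)\,dr .
\]
Since $\eps T=t/\eps$, the first term equals $\frac{z_1 t}{\eps}$ with $z_1=\frac12\int_\R g(r)\,dr=\frac{1}{2\sqrt{2\pi}}\int_\R \frac{R_\eta(r)}{\sqrt{1+i|r|}}\,dr$, which I would take as the definition \eqref{e.defz1}; the second term is a fixed finite constant times $\eps$, hence $O(\eps)$, proving the claim.

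The computation is essentially mechanical, so there is no serious obstacle. The only place demanding genuine attention is the evaluation of the complex Gaussian integral and the correct, consistent choice of the branch of $\sqrt{1+i|s-u|}$; the finiteness of the remainder integral $\int_\R|r|\,g(r)\,dr$ that controls the $O(\eps)$ term is immediate from the compact support of $R_\eta$.
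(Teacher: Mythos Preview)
Your proof is correct and follows essentially the same route as the paper's: Fubini, the explicit Gaussian computation $\E_\B[e^{-\frac{i}{2}(B_s-B_u)^2}]=(1+i|s-u|)^{-1/2}$, and then use of the compact support of $R_\eta$ to isolate the leading $t/\eps$ term with an $O(\eps)$ remainder. Your formula for $z_1$ coincides with the paper's \eqref{e.defz1} because $R_\eta$ is even, so that $\tfrac{1}{2}\int_\R g(r)\,dr=\int_0^\infty g(r)\,dr=\int_0^\infty \tfrac{R_\eta(u)}{\sqrt{2\pi(1+iu)}}\,du$.
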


\begin{lemma}\label{l.wkcon}
For fixed $t>0$, as $\eps\to0$, 
\begin{equation}\label{e.inv}
(\eps B_{t/\eps^2}, X_t^\eps-\E_\B[X_t^\eps])\Rightarrow (N_1, N_2+iN_3)
\end{equation}
in distribution, where $N_1\sim N(0,t)$ and is independent of $(N_2,N_3)\sim N(0,t\mathsf{A})$, with the $2\times 2$ covariance matrix $\mathsf{A}$ defined in \eqref{e.coma}.
\end{lemma}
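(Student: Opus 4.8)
The plan is to exploit the compact support of $R_\eta$, which turns the double time integral in \eqref{e.defXepst} into a \emph{finite-range-dependent} additive functional of the Brownian path, and then run a central limit theorem for such functionals. First I would put both coordinates on a common scale. By Brownian scaling $\eps B_{t/\eps^2}\stackrel{\text{law}}{=}\sqrt{t}\,B_T/\sqrt{T}$ with $T:=t/\eps^2$, so the first coordinate is \emph{exactly} $N(0,t)$ for every $\eps$ and only its joint law with the second is at issue. Writing
\[
Y_T:=\int_0^T\int_0^T R_\eta(s-u)\,q(\sqrt{i}(B_s-B_u))\,ds\,du,
\]
we have $X_t^\eps-\E_\B[X_t^\eps]=\tfrac{\eps}{2}\bigl(Y_T-\E_\B[Y_T]\bigr)$, and since $\tfrac{\eps}{2}\sqrt{T}=\tfrac{\sqrt t}{2}$, the claim \eqref{e.inv} is equivalent to a joint central limit theorem for $\bigl(B_T/\sqrt{T},\,(Y_T-\E_\B[Y_T])/\sqrt{T}\bigr)$, the second coordinate being $\C$-valued with real and imaginary parts governed by $\tfrac{1}{\sqrt{2\pi}}\cos\tfrac{(B_s-B_u)^2}{2}$ and $-\tfrac{1}{\sqrt{2\pi}}\sin\tfrac{(B_s-B_u)^2}{2}$, both bounded by $1/\sqrt{2\pi}$.

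Because $\mathrm{supp}\,R_\eta$ is compact, say contained in $[-L,L]$, the inner integral localizes: using the symmetry of the integrand, $Y_T=2\int_0^T h(s)\,ds$ with $h(s):=\int_0^{s\wedge L}R_\eta(r)\,q(\sqrt{i}(B_s-B_{s-r}))\,dr$. For $s\ge L$ the variable $h(s)$ depends only on the Brownian increments inside the window $[s-L,s]$; hence $(h(s))_{s\ge L}$ is stationary, bounded, and $L$-dependent, i.e. $h(s)$ and $h(s')$ are independent whenever $|s-s'|>L$. Consequently the normalized variance is finite, $\tfrac1T\,\mathrm{Var}_{\B}\!\bigl(\int_0^T h\bigr)\to\int_{-L}^L\mathrm{Cov}_{\B}(h(0),h(r))\,dr$ (with $h$ understood through the two-sided increment process), and similarly for the real/imaginary cross-covariances. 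To obtain the limit law I would use the classical big-block/small-block (Bernstein) decomposition: partition $[0,T]$ into big blocks of length $b_T$ with $L\ll b_T\ll T$ separated by gaps of length $L$, so that the big-block contributions to $\int_0^T h$ are mutually independent while the gaps carry a vanishing fraction of the variance; a multivariate Lindeberg--Feller theorem (the Lindeberg condition being immediate from boundedness) then yields joint asymptotic normality of the triple $\bigl(B_T/\sqrt T,\ \mathrm{Re}(Y_T-\E_\B[Y_T])/\sqrt T,\ \mathrm{Im}(Y_T-\E_\B[Y_T])/\sqrt T\bigr)$. An equivalent route is the method of cumulants: boundedness and $L$-dependence force every joint cumulant of order $n\ge 3$ of the normalized vector to be $O(T^{1-n/2})\to 0$, while the second-order cumulants converge, again pinning down a Gaussian limit.

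The remaining point is the block-diagonal structure of the limiting covariance, i.e. the independence of $N_1$ from $(N_2,N_3)$. This is forced by the reflection symmetry $B\mapsto -B$, which preserves the law of Brownian motion. Under it $B_T$ is odd while $Y_T$ is \emph{even}, since $q$ is an even function and the integrand depends on $B_s-B_u$ only through $(B_s-B_u)^2$. Hence $\E_\B[B_T(Y_T-\E_\B[Y_T])]=\E_\B[B_T Y_T]=-\E_\B[B_T Y_T]=0$ \emph{exactly}, for both the real and imaginary parts and for every $T$. Thus the cross-covariances $\mathrm{Cov}(N_1,N_2)$ and $\mathrm{Cov}(N_1,N_3)$ vanish, and since the triple is jointly Gaussian, $N_1$ is independent of $(N_2,N_3)$. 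Tracking the $\tfrac{\sqrt t}{2}$ prefactor together with $Y_T=2\int_0^T h$ then gives a covariance of $(N_2,N_3)$ of the form $t\mathsf A$, with $\mathsf A$ read off from the asymptotic $2\times2$ covariance of $(\mathrm{Re}\,h,\mathrm{Im}\,h)$; carrying out this explicit Gaussian computation produces the matrix recorded in \eqref{e.coma}.

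The main obstacle is the central limit theorem for the oscillatory complex functional $Y_T$. While boundedness makes all moments trivially finite and the Lindeberg condition automatic, one must verify that the asymptotic covariance is the genuinely nondegenerate matrix claimed and is not annihilated by the oscillation $e^{-i(B_s-B_u)^2/2}$, which requires the concrete evaluation of $\int_{-L}^L\mathrm{Cov}_{\B}(h(0),h(r))\,dr$ via joint Gaussian densities of the relevant increments. One must also dispose of the $O(L)$ edge effects at the endpoints and the mild non-stationarity of $h(s)$ for $s<L$; after normalization by $\sqrt T$ these are asymptotically negligible, but their control, along with the explicit identification of $\mathsf A$, is where the real work lies.
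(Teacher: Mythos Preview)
Your strategy is sound and genuinely different from the paper's. You recast $X_t^\eps-\E_\B[X_t^\eps]$ as an additive functional $\eps\int_0^{t/\eps^2}(h(s)-\E_\B h(s))\,ds$ of a bounded, stationary, $L$-dependent process and then invoke a Bernstein big-block/small-block CLT, using the reflection symmetry $B\mapsto -B$ to kill the cross--covariance with $B_T$. The paper instead applies the Clark--Ocone formula to write $X_t^\eps-\E_\B[X_t^\eps]=\eps\int_0^{t/\eps^2}Y_r\,dB_r$ as a genuine stochastic integral with a stationary, finite-range integrand $Y_r$, and then uses the ergodic theorem on the bracket processes together with the martingale central limit theorem. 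In that route the independence of $N_1$ from $(N_2,N_3)$ drops out of the vanishing cross--bracket $\eps^2\int_0^{t/\eps^2}Y_r\,dr\to t\,\E_\B[Y_r]=0$, which is ultimately the same odd-symmetry observation you use. The paper's approach yields the compact single-time formula $\mathsf{A}_{jl}=\E_\B[Y_{j,0}Y_{l,0}]$ recorded in \eqref{e.coma}, whereas your route naturally produces the lag-integrated expression $\mathsf{A}_{jl}=\int_{-L}^{L}\mathrm{Cov}_\B\bigl(h_j(0),h_l(r)\bigr)\,dr$; the two agree by It\^o isometry, but the first is tidier for the downstream use in \eqref{e.defz2}. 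Your argument has the virtue of avoiding Malliavin calculus altogether; the paper's has the virtue that the joint limit with $\eps B_{t/\eps^2}$ and the covariance structure emerge automatically from the martingale framework. Two small caveats on your side: for the Lindeberg condition you will want $b_T=o(\sqrt{T})$ rather than merely $b_T\ll T$, and the cumulant alternative, while clean for the marginal law of $(N_2,N_3)$, needs a word about the joint with $B_T$ since $B_T$ is not an additive functional of the form $\int_0^T f\,ds$; your blocking route handles the joint without difficulty because the block contributions to $B_T$ and to $\int h$ are simultaneously independent across blocks.
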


\begin{lemma}\label{l.uniforminte}
For any $\lambda\in\R$, there exists a constant $C>0$ such that 
\[
\E_\B[|e^{\lambda (X_t^\eps-\E_\B[X_t^\eps])}|]\leq C
\]
uniformly in $\eps>0$.
\end{lemma}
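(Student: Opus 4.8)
The plan is to reduce the complex exponential moment to a real one and then exploit the boundedness of the integrand together with the finite-range dependence of the Brownian increments. The first observation is that since $\lambda\in\R$ and $|e^w|=e^{\mathrm{Re}\,w}$ for $w\in\C$, one has $|e^{\lambda(X_t^\eps-\E_\B[X_t^\eps])}|=e^{\lambda(\mathrm{Re}\,X_t^\eps-\E_\B[\mathrm{Re}\,X_t^\eps])}$, so only the real part of $X_t^\eps$ matters and the imaginary fluctuation is irrelevant. A direct computation gives $q(\sqrt{i}y)=\frac{1}{\sqrt{2\pi}}e^{-iy^2/2}$, whence $\mathrm{Re}[q(\sqrt{i}(B_s-B_u))]=\frac{1}{\sqrt{2\pi}}\cos(\frac{(B_s-B_u)^2}{2})$ is bounded by $\frac{1}{\sqrt{2\pi}}$. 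Writing $T=t/\eps^2$ and
\[
g_s:=\int_0^T R_\eta(s-u)\,\mathrm{Re}[q(\sqrt{i}(B_s-B_u))]\,du,\qquad f_s:=g_s-\E_\B[g_s],
\]
I would record that $\mathrm{Re}\,X_t^\eps-\E_\B[\mathrm{Re}\,X_t^\eps]=\frac{\eps}{2}\int_0^T f_s\,ds$ and that $|f_s|\le M:=\frac{2}{\sqrt{2\pi}}\|R_\eta\|_{L^1}$, using that $R_\eta$ is real and integrable.

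The key structural input is finite-range dependence. Because $\eta\in\mathcal{C}_c^\infty$, the covariance $R_\eta$ is supported in some $[-L,L]$, so $g_s$ depends only on the Brownian increments in the window $[s-L,s+L]$. Since increments of $B$ over disjoint time intervals are independent, $f_s$ and $f_{s'}$ are independent whenever $|s-s'|>2L$. I would then partition $[0,T]$ into consecutive blocks of a fixed length $\ell>2L$, set $W_k:=\int_{\text{block }k}f_s\,ds$, and note that $\E_\B[W_k]=0$, $|W_k|\le M\ell$, and that the $W_k$ with even (resp. odd) indices are mutually independent, because their dependence windows are pairwise disjoint once the indices differ by at least two and $\ell>2L$.

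With this in hand I would split $S_T:=\int_0^T f_s\,ds=S_{\mathrm{even}}+S_{\mathrm{odd}}$ and use Cauchy--Schwarz to write $\E_\B[e^{\theta S_T}]\le(\E_\B[e^{2\theta S_{\mathrm{even}}}]\,\E_\B[e^{2\theta S_{\mathrm{odd}}}])^{1/2}$, where $\theta:=\lambda\eps/2$. For each parity the factor is a product of independent terms, and Hoeffding's lemma applied to each mean-zero, bounded $W_k$ gives $\E_\B[e^{2\theta W_k}]\le e^{2\theta^2M^2\ell^2}$. Since there are $O(T/\ell)$ blocks of each parity, this produces a sub-Gaussian bound $\E_\B[e^{\theta S_T}]\le e^{C\theta^2T}$ with $C$ depending only on $M$ and $\ell$, hence only on $\varrho$. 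The proof then closes on the crucial scaling cancellation: with $\theta=\lambda\eps/2$ and $T=t/\eps^2$ one gets $\theta^2T=\lambda^2t/4$, independent of $\eps$, so $\E_\B[|e^{\lambda(X_t^\eps-\E_\B[X_t^\eps])}|]=\E_\B[e^{\theta S_T}]\le e^{C\lambda^2t/4}$, uniformly in $\eps$.

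I expect the main obstacle to be producing a concentration estimate with exactly the right $\eps$-scaling: a naive bound using only $|f_s|\le M$ gives $\frac{\eps}{2}MT=O(1/\eps)$ and blows up, so one must genuinely extract the Gaussian-type fluctuation scale rather than the pathwise size. The finite-range dependence is precisely what converts the time integral into a sum of essentially independent bounded contributions, and matching the number of blocks $O(T)$ against $\theta^2=O(\eps^2)$ is what produces the $O(1)$ exponent; an alternative route via Borell--TIS Gaussian concentration would require controlling a Cameron--Martin Lipschitz constant, which seems less transparent here. Minor points to absorb into the constant $C$, without affecting the scaling, are the possibly incomplete boundary block when $T/\ell\notin\N$ and the overlap of dependence windows at block interfaces.
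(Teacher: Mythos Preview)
Your proof is correct and follows essentially the same route as the paper: block decomposition using the compact support of $R_\eta$, an even/odd splitting via Cauchy--Schwarz to obtain independent families of bounded mean-zero blocks, and a sub-Gaussian bound per block (you invoke Hoeffding's lemma, the paper writes out the Taylor expansion by hand) yielding the key cancellation $\theta^2 T=(\lambda\eps/2)^2\cdot t/\eps^2=O(1)$. The only cosmetic difference is that the paper first symmetrizes the double integral so that the centered integrand $\mathcal Z_s$ depends only on backward increments in $[s-M,s]$, which lets it take blocks of length exactly $M$ rather than your $\ell>2L$.
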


\begin{remark}
With some extra work as in \cite[Proposition 2.3]{gu2018another}, the convergence in \eqref{e.inv} can be upgraded to the process level. To keep the argument short, we only consider the marginal distributions, which is enough for the proof of Theorem~\ref{t.mainth}.
\end{remark}

\begin{proof}[Proof of Lemma~\ref{l.mean}]
A straightforward calculation gives
\[
\begin{aligned}
\E_\B[X_t^\eps]=&\eps\int_0^{t/\eps^2}ds \int_0^s   \frac{R_\eta(s-u)}{\sqrt{2\pi}} \E_\B[e^{-\frac{i}{2}|B_s-B_u|^2}] du\\
=&\eps\int_0^{t/\eps^2}ds \int_0^s \frac{R_\eta(u) }{\sqrt{2\pi}} \E_\B[e^{-\frac{i}{2}|B_s-B_{s-u}|^2}] du.
\end{aligned}
\]
Since $R_\eta$ is compactly supported, it is clear that 
\[
\E_\B[X_t^\eps]=\frac{z_1t}{\eps}+O(\eps),
\]
where 
\begin{equation}\label{e.defz1}
z_1=\int_0^\infty \frac{R_\eta(u)}{\sqrt{2\pi}} \E_\B[e^{-\frac{i}{2}|B_u|^2}] du=\int_0^\infty \frac{R_\eta(u)}{\sqrt{2\pi(1+iu)}}du.
\end{equation}
The proof is complete.
\end{proof}

\begin{proof}[Proof of Lemma~\ref{l.wkcon}]
 %Thus, it is a standard result in probability theory that an invariance principle holds for $\eps\int_0^{t/\eps^2}\mathcal{Z}_sds$. 
The proof is based on a martingale decomposition. Denote the Brownian filtration by $\mathscr{F}_r$ and the Malliavin derivative with respect to $dB_r$ by $D_r$. An application of the Clark-Ocone formula \cite[Proposition 1.3.14]{nualart} leads to 
\[
\begin{aligned}
X_t^\eps-\E_\B[X_t^\eps]=\int_0^{t/\eps^2}\E_\B[D_rX_t^\eps |\mathscr{F}_r] dB_r.
\end{aligned}
\]
Recall that $X_t^\eps$ is defined in \eqref{e.defXepst}, by chain rule and the fact that 
\begin{equation}\label{e.3132}
D_r (B_s-B_u)=D_r \int_u^s dB_r =1_{[u,s]}(r),
\end{equation}
 we have
\[
D_rX_t^\eps=-i\eps\int_0^{t/\eps^2}\int_0^s \frac{R_\eta(s-u)}{\sqrt{2\pi}} e^{-\frac{i}{2}|B_s-B_u|^2}(B_s-B_u)1_{[u,s]}(r)duds, \quad r\in[0,t/\eps^2].
\]
%for all $r\in[0,t/\eps^2]$. 
Taking the conditional expectation with respect to $\mathscr{F}_r$ and computing the expectation 
\[
\E[ e^{-\frac{i}{2}X^2} X\,|\, B_r-B_u]
\] 
with $X\sim N(B_r-B_u,s-r)$ explicitly yields
\[
\begin{aligned}
Y_r^{\eps,t}&:=\eps^{-1}\E_\B[D_rX_t^\eps|\mathscr{F}_r]\\
&=-i \int_0^{t/\eps^2}\int_0^s \frac{R_\eta(s-u)}{\sqrt{2\pi}(1+i(s-r))^{3/2}}e^{-\frac{i|B_r-B_u|^2}{2(1+i(s-r))}}(B_r-B_u)1_{[u,s]}(r)duds.
\end{aligned}
\]
By the assumption, there exists $M>0$ such that $R_\eta(s-u)=0$ if $s-u\geq M$. Using the indicator function $1_{[u,s]}(r)$ in the above expression, we have for $M\leq r\leq t/\eps^2-M$ that 
\[
\begin{aligned}
Y_r^{\eps,t}=Y_r:%=\eps^{-1}\E_\B[D_rX_t^\eps|\mathscr{F}_r]\\
&=-i\int_r^{r+M}\int_{r-M}^r \frac{R_\eta(s-u)}{\sqrt{2\pi}(1+i(s-r))^{3/2}}e^{-\frac{i|B_r-B_u|^2}{2(1+i(s-r))}}(B_r-B_u)1_{[u,s]}(r)duds\\
&=-i \int_0^M\int_0^M\frac{R_\eta(s+u)}{\sqrt{2\pi}(1+is)^{3/2}}e^{-\frac{i|B_r-B_{r-u}|^2}{2(1+is)}}(B_r-B_{r-u})duds.
\end{aligned}
\]
The $Y_r$ defined above is only for $r\in [M,t/\eps^2-M]$, but we can extend the definition to $r\in\R$ by interpreting $B$ as a two-sided Brownian motion. Thus, by the fact that the Brownian motion has stationary and independent increments, we know $\{Y_r\}_{r\in\R}$ is a stationary process with a finite range of dependence. 

%Since \eqref{e.wkerr} and \eqref{e.covmat} already shows the convergence in distribution of $X_t^\eps-\E_\B[X_t^\eps]$, hence the tightness of $(\eps B_{t/\eps^2}, X_t^\eps-\E_\B[X_t^\eps])$ in $\mathcal{C}([0,\infty),\R^2)$, it suffices to prove the convergence of the finite dimensional distribution. 
It is easy to check that 
\[
X_t^\eps-\E_\B[X_t^\eps]-\eps\int_0^{t/\eps^2}Y_rdB_r=\eps \int_0^{t/\eps^2} (Y_r^{\eps,t}-Y_r) dB_r\to0
\]
in probability. Define $Y_{1,r}=\mathrm{Re}[Y_r]$ and $Y_{2,r}=\mathrm{Im}[Y_r]$, applying Ergodic theorem, we have 
\[
\eps^2\int_0^{t/\eps^2} Y_{j,r}Y_{l,r} dr\to t\,\E[Y_{j,r}Y_{l,r}], \   \ j,l=1,2,
\]
and 
\[
 \eps^2\int_0^{t/\eps^2} Y_rds \to t\,\E[Y_r]=0,
\]
almost surely. We apply the martingale central limit theorem \cite[pp. 339]{ethier2009markov} to derive 
\[
(\eps B_{t/\eps^2}, \eps\int_0^{t/\eps^2} Y_rdB_r)\Rightarrow (B_t,W^1_t+iW^2_t)
\]
in $\mathcal{C}[0,\infty)$, where $B_t$ is a standard Brownian motion, independent of the two-dimensional Brownian motion $(W^1_t,W^2_t)$ with the covariance matrix $\mathsf{A}=(\mathsf{A}_{jl})_{j,l=1,2}$ given by
\begin{equation}\label{e.coma}
\mathsf{A}_{jl}=\E[Y_{j,r}Y_{l,r}].
\end{equation}
 The proof is complete.
\end{proof}

\begin{proof}[Proof of Lemma~\ref{l.uniforminte}]
%Similar to the proof of Lemma~\ref{l.wkcon}, we  write 
%\[
%X_t^\eps-\E_\B[X_t^\eps]=\eps\int_0^{t/\eps^2}  \mathcal{Z}_sds
%\]
%where 
%\[
%\mathcal{Z}_s=\int_0^s\frac{ R_\phi(u)}{\sqrt{2\pi}} \left(e^{-\frac{i}{2}|B_s-B_{s-u}|^2}-\E_\B[e^{-\frac{i}{2}|B_s-B_{s-u}|^2}]\right)du.
%\]
%Assuming $R_\phi(u)=0$ for $|u|\geq M$ and 
We write 
\[
X_t^\eps-\E_\B[X_t^\eps]=\eps\int_0^{t/\eps^2}  \mathcal{Z}_sds,
\]
where 
\[
\mathcal{Z}_s:=\int_0^s\frac{ R_\eta(u)}{\sqrt{2\pi}} \left(e^{-\frac{i}{2}|B_s-B_{s-u}|^2}-\E_\B[e^{-\frac{i}{2}|B_s-B_{s-u}|^2}]\right)du.
\]
Again, assuming that $R_\eta(u)=0$ for $|u|\geq M$. Let $N_\eps=[\tfrac{t}{M\eps^2}]$, we have 
\[
\begin{aligned}
X_t^\eps-\E_\B[X_t^\eps]=&\eps \sum_{k=2}^{N_\eps} \int_{(k-1)M}^{kM} \mathcal{Z}_sds+\eps\left(\int_0^M+ \int_{N_\eps M}^{t/\eps^2}\right)\mathcal{Z}_sds\\
=&\eps \sum_{k=2}^{N_\eps}Z_k+\eps\left(\int_0^M+ \int_{N_\eps M}^{t/\eps^2}\right)\mathcal{Z}_sds
\end{aligned}
\]
where we defined $Z_k:=\int_{(k-1)M}^{kM} \mathcal{Z}_sds$ for $2\leq k\leq N_\eps$.  Since $\mathcal{Z}_s$ is uniformly bounded, we have 
\[
\left|\eps \left(\int_0^M+\int_{N_\eps M}^{t/\eps^2}\right)\mathcal{Z}_sds\right|\les \eps.
\]
For the first part, we write 
\[
\eps\sum_{k=2}^{N_\eps}Z_k=\left(\sum_{k\in A_{\eps,1}}+\sum_{k\in A_{\eps,2}}\right)\eps Z_k,
\]
with $A_{\eps,1}=\{2\leq k\leq N_\eps: k \mbox{ even }\}$ and $A_{\eps,2}=\{2\leq k\leq N_\eps: k \mbox{ odd }\}$. By the independence of the increments of the Brownian motion, we know that $\{Z_k\}_{k\in  A_{\eps,j}}$ are i.i.d. for $j=1$ and $2$. Therefore, 
\[
\begin{aligned}
\E_\B[|e^{\lambda (X_t^\eps-\E_\B[X_t^\eps])}|] \les &\E_\B[e^{\lambda \eps \sum_{k=2}^{N_\eps} \mathrm{Re}[Z_k]}]\\
\les & \sqrt{\E_\B[ e^{2\lambda \eps \sum_{k\in A_{\eps,1}} \mathrm{Re}[Z_k]}] \E_\B[ e^{2\lambda \eps \sum_{k\in A_{\eps,2}} \mathrm{Re}[Z_k]}]}.
\end{aligned}
\]
By the fact that $\{Z_k\}$ are bounded random variables with zero mean, we have for $j=1,2$ that 
\begin{equation}\label{e.3131}
\begin{aligned}
\E_\B[e^{2\lambda \eps \sum_{k\in A_{\eps,j}} \mathrm{Re}[Z_k]}]=& \prod_{k\in A_{\eps,j}} \E_\B[ e^{2\lambda \eps \mathrm{Re}[Z_k]}]\\
\leq &\prod_{k=1}^{N_\eps}\left(1+2\lambda^2\eps^2 \E_\B[|\mathrm{Re}[Z_k]|^2]+O(\eps^3) \right) \les 1.
\end{aligned}
\end{equation}
The proof is complete.
\end{proof}

\subsection{Proof of Theorem~\ref{t.mainth}.} By \eqref{e.prore}, we have 
\begin{equation}\label{e.prore2}
\E[\what{\phi}_\eps(t,\xi)]e^{\E_\B[X_t^\eps]}=\what{\phi}_0(\xi)\E_\B[ e^{i\sqrt{i}\xi \eps B_{t/\eps^2}-(X_t^\eps-\E_\B[X_t^\eps])}].
\end{equation}
By Lemma~\ref{l.wkcon}, we know that, for fixed $t>0,\xi\in\R$, the random variable 
\[
i\sqrt{i}\xi \eps B_{t/\eps^2}-(X_t^\eps-\E_\B[X_t^\eps])\Rightarrow i\sqrt{i}\xi N_1-(N_2+iN_3)
\]
in distribution, where $N_1\sim N(0,t)$ independent of $(N_2,N_3)\sim N(0,t\mathsf{A})$. Since Lemma~\ref{l.uniforminte} provides the uniform integrability:
\[
\E_\B[ |e^{i\sqrt{i}\xi \eps B_{t/\eps^2}-(X_t^\eps-\E_\B[X_t^\eps])}|^2]\leq \sqrt{\E_\B[|e^{i\sqrt{i}\xi \eps B_{t/\eps^2}}|^2]\E_\B[|e^{-(X_t^\eps-\E_\B[X_t^\eps])}|^2]}\les 1,
\] 
sending $\eps\to0$ on both sides of \eqref{e.prore2} and applying Lemma~\ref{l.mean}, we have 
\[
\begin{aligned}
\E[\what{\phi}_\eps(t,\xi)]e^{\frac{z_1t}{\eps}}\to \what{\phi}_0(\xi)\E_\B[ e^{i\sqrt{i}\xi N_1-(N_2+iN_3)}]=\what{\phi}_0(\xi)e^{-\frac{i}{2}|\xi|^2t} e^{\frac12(\mathsf{A}_{11}-\mathsf{A}_{22}+2i\mathsf{A}_{12})t}.
\end{aligned}
\]
Define 
\begin{equation}\label{e.defz2}
z_2=\frac12(\mathsf{A}_{11}-\mathsf{A}_{22}+2i\mathsf{A}_{12}),
\end{equation}
the proof of Theorem~\ref{t.mainth} is complete.

\subsection*{Acknowledgments} We would like to thank Lenya Ryzhik for asking this question and stimulating discussions. We also thank the two anonymous referees for a careful reading of the manuscript and for pointing out several possible
improvements as well as a technical mistake in the original manuscript. The work is partially supported by the NSF grant DMS-1613301/1807748 and the Center for Nonlinear Analysis at CMU.

%\bibliographystyle{abbrv}
%\bibliography{white}

\newcommand{\noop}[1]{} \def\cprime{$'$}

\end{document}